\documentclass[11pt,reqno]{amsart}
\usepackage{amssymb,amsmath,latexsym,amsthm}
\usepackage{mathrsfs}
\usepackage{tikz}
\usepackage{graphicx}
\usepackage{mathtools}

\usepackage{url}		
\usetikzlibrary{angles,quotes}
\usepackage{multicol}

\usepackage{empheq}
\usepackage{caption}

\calclayout

\makeatletter
\newcommand\binomialCoefficient[2]{%
    \c@pgf@counta=#1
    \c@pgf@countb=#2
    \c@pgf@countc=\c@pgf@counta%
    \advance\c@pgf@countc by-\c@pgf@countb%
    \ifnum\c@pgf@countb>\c@pgf@countc%
        \c@pgf@countb=\c@pgf@countc%
    \fi%
    \c@pgf@countc=1
    \c@pgf@countd=0
    \pgfmathloop
        \ifnum\c@pgf@countd<\c@pgf@countb%
        \multiply\c@pgf@countc by\c@pgf@counta%
        \advance\c@pgf@counta by-1%
        \advance\c@pgf@countd by1%
        \divide\c@pgf@countc by\c@pgf@countd%
    \repeatpgfmathloop%
    \the\c@pgf@countc%
}
\newcommand{\skipitems}[1]{%
  \addtocounter{\@enumctr}{#1}%
}
\makeatother

\theoremstyle{plain}
\numberwithin{equation}{section}
\newtheorem{thm}{Theorem}[section]

\newtheorem{remark}{Remark}
\newtheorem*{theorem*}{Theorem}
\newtheorem*{proposition*}{Proposition}

\newtheorem{corollary}[thm]{Corollary}

\title{A Combinatorial Proof for Partitions of\\Pythagorean Triples Into Three Parts}
\author{Ivan V. Morozov}
\date{3 July 2026}

\begin{document}

\begin{abstract}
Any Pythagorean triple $\{a,b,c\}$ such that $a^{2}+b^{2}=c^{2}$ satisfies an elegant relation between its partitions into three parts, namely $p(a,3)+p(b,3)=p(c,3)$. While this property follows from elementary analytic methods, we give the first combinatorial proof of this relation.
\end{abstract}

\maketitle

\section{Introduction}

The number of integer partitions of a natural number $n$ into exactly three parts is well-understood to be expressed by
\begin{equation*}
p(n,3)=\left[\frac{n^{2}+3}{12}\right]\,,
\end{equation*}
where $[\cdots]$ denotes rounding to the nearest integer \cite{hons}. It is easily verifiable from this formula that $p(a,3)+p(b,3)=p(c,3)$, where $\{a,b,c\}$ is a Pythagorean triple. However, the lack of a combinatorial proof and a homologous communication of this problem from Dr. B. Hopkins \cite{miller} prompted the construction of such proof presented in this paper.

For example, for the Pythagorean triple $\{9,12,15\}$ we have $p(9,3)+p(12,3)=7+12=19=p(15,3)$, and for $\{15,36,39\}$ we have $p(15,3)+p(36,3)=19+108=127=p(39,3)$. In fact, we can easily see that for the family of Pythagorean triples $\{a,b,c\}$ where $a\equiv\pm 3\pmod{12}$ and $b\equiv 0\text{ or }6\pmod{12}$, we get $p(a,3)+p(b,3)=\frac{a^{2}+3}{12}+\frac{b^{2}}{12}=\frac{c^{2}+3}{12}=p(c,3)$, which is true since $c\equiv\pm 3\pmod{12}$.

We first derive a combinatorial formula for the necessary cases of the $p(n,3)$ formula by creating a bijection with lattice points bounded by a particular right triangle. Then, we use this construction to prove the desired identity for all necessary cases.

\section{Combinatorial Derivation of the $p(n,3)$ Formula}

Suppose $n\geq 3$, and let $n=n_{1}+n_{2}+n_{3}$ be a partition of $n$ into $3$ parts, with $n_{1}\geq n_{2}\geq n_{3}\geq 1$. We also define $x=n_{1}-n_{2}$, $y=n_{2}-n_{3}$, and $z=n_{3}-1$, which produces the set
\begin{equation}
P(n)=\{(x,y,z)\in\mathbb{Z}^{3}_{\geq 0}:x+2y+3z=n-3\}\,.
\end{equation}
These are lattice points that lie within the triangular region $u+2v+3w=n-3$ for $(u,v,w)\in\mathbb{R}^{3}_{\geq 0}$. Note that there is a natural projection of $P(n)$ onto the $yz$-plane, which we will define as
\begin{equation}
P_{x}(n)=\{(y,z)\in\mathbb{Z}^{2}_{\geq 0}:2y+3z\leq n-3\}\,.
\end{equation}
\begin{remark}
$\lvert P(n)\rvert=p(n,3)$.
\end{remark}
Our definitions of $x,y,z$ yield a uniquely determined partition given any $(x,y,z)\in P(n)$. So, the map $(n_{1},n_{2},n_{3})\mapsto(x,y,z)=(n_{1}-n_{2},n_{2}-n_{3},n_{3}-1)$ is bijective and has the inverse $(x,y,z)\mapsto(n_{1},n_{2},n_{3})=(x+y+z+1,y+z+1,z+1)$. Thus $\lvert P(n)\rvert=p(n,3)$.
\begin{remark}
$\lvert P(n)\rvert=\lvert P_{x}(n)\rvert$.
\end{remark}
The map $P(n)\to P_{x}(n)$ that maps $(x,y,z)\mapsto(y,z)$ is bijective. Fixing $(y,z)\in P_{x}(n)$, we see that there exists a unique value $x=n-3-2y-3z\geq 0$ that satisfies $(x,y,z)\in P(c)$. Existence gives us surjection, and uniqueness injection, hence bijection.

Observe that $P_{x}(n)$ is the set of lattice points inside the triangular region $2v+3w\leq n-3$ for $(v,w)\in\mathbb{R}^{2}_{\geq 0}$. Moreover, this is a right triangle with vertices at $(0,0)$, $\left(\frac{n-3}{2},0\right)$, and $\left(0,\frac{n}{3}-1\right)$, so its hypotenuse is a segment of the line
\begin{equation}\label{1}
y=-\frac{3}{2}z+\frac{n-3}{2}\,,
\end{equation}
or, alternatively,
\begin{equation}\label{2}
z=-\frac{2}{3}y+\frac{n}{3}-1\,.
\end{equation}
Using equation \eqref{1}, for each nonnegative integer $0\leq z\leq\left\lfloor\frac{n}{3}\right\rfloor-1$, there are $\left\lfloor-\frac{3}{2}z+\frac{n-3}{2}\right\rfloor+1$ lattice points in $P_{x}(n)$ with that $z$-coordinate. Summing across $z$-coordinates we get
\begin{equation}
\lvert P_{x}(n)\rvert=p(n,3)=\sum_{z=0}^{\left\lfloor\frac{n}{3}\right\rfloor-1}\left(\left\lfloor-\frac{3}{2}z+\frac{n-3}{2}\right\rfloor+1\right)=\sum_{z=1}^{\left\lfloor\frac{n}{3}\right\rfloor}\left(\left\lfloor-\frac{3}{2}z+\frac{n}{2}\right\rfloor+1\right)
\end{equation}
by re-indexing. This is a well-known formula for $p(n,3)$, which is derivable from the fact that $p(n,2)=\left\lfloor\frac{n}{2}\right\rfloor$ and the recursion formula $p(n,k)=p(n-k,k)+p(n,k-1)$ \cite{skiena}. Alternatively, using equation \eqref{2}, we get an equivalent formula
\begin{equation}
\lvert P_{x}(n)\rvert=p(n,3)=\sum_{y=0}^{\left\lfloor\frac{n-3}{2}\right\rfloor}\left(\left\lfloor-\frac{2}{3}y+\frac{n}{3}-1\right\rfloor+1\right)=\sum_{y=1}^{\left\lfloor\frac{n-1}{2}\right\rfloor}\left\lfloor-\frac{2}{3}y+\frac{n+2}{3}\right\rfloor\,.
\end{equation}

\begin{corollary}\label{cor}
The formula $p(n,3)$ is expressible by cases of residues modulo $2$ and $3$:
\[
p(n,3)=\begin{cases} 
\frac{n^{2}+3}{12} & n\equiv 1\pmod{2},n\equiv 0\pmod{3} \\
\frac{n^{2}}{12} & n\equiv 0\pmod{2},n\equiv 0\pmod{3} \\
\frac{n^{2}-1}{12} & n\equiv 1\pmod{2},n\equiv\pm 1\pmod{3} \\
\frac{n^{2}-4}{12} & n\equiv 0\pmod{2},n\equiv\pm 1\pmod{3}
\end{cases}\,.
\]
\end{corollary}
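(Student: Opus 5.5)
We derive the four cases directly from the lattice-point count of the previous section,
\[
p(n,3)=\sum_{z=0}^{\lfloor n/3\rfloor-1}\left(\left\lfloor\frac{n-3-3z}{2}\right\rfloor+1\right),
\]
by splitting the sum according to the parity of $z$. The summand $\lfloor (n-3-3z)/2\rfloor$ loses its floor exactly when $n-3-3z\equiv n-1-z$ is even. So the summand equals $(n-1-3z)/2$ when $z\equiv n-1\pmod 2$, and $(n-2-3z)/2$ otherwise. Writing $m=\lfloor n/3\rfloor$, we get
\[
p(n,3)=\sum_{z=0}^{m-1}\frac{n-1-3z}{2}-\frac{1}{2}\,\#\{0\le z\le m-1: z\equiv n\pmod 2\}.
\]
The first sum is elementary:
\[
\frac{m(n-1)}{2}-\frac{3m(m-1)}{4}.
\]
The correction term depends only on the parities of $m$ and $n$. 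If $m$ is even, exactly $m/2$ indices have each parity. If $m$ is odd, the indices $z\equiv 0\pmod 2$ number $(m+1)/2$ and the others $(m-1)/2$.

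We then substitute $n=3m+r$ with $r\in\{0,1,2\}$. Since $n\equiv r+m\pmod 2$, the residues of $n$ modulo $2$ and $3$ are encoded by $r$ and the parity of $m$. This gives six subcases, which are the six residues of $n$ modulo $6$. In each one we compute
\[
\frac{m(3m+r-1)}{2}-\frac{3m(m-1)}{4}-\frac{1}{2}\cdot(\text{count})
\]
as a polynomial in $m$ and compare it with the claimed expression in $n$.

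As a check, take $r=0$ with $m$ even, so $n=3m$ is even. The count is $m/2$, and
\[
\frac{m(3m-1)}{2}-\frac{3m^2-3m}{4}-\frac{m}{4}=\frac{3m^2}{4}=\frac{n^2}{12}.
\]
Now take $r=0$ with $m$ odd, so $n$ is odd. The count is $(m-1)/2$, because $z\equiv n\equiv 1$. Then
\[
\frac{6m^2-2m-3m^2+3m-m+1}{4}=\frac{3m^2+1}{4}=\frac{n^2+3}{12}.
\]
The remaining four subcases ($r=1,2$, each with $m$ of either parity) go the same way. In each, the claimed expression is $(n^2-1)/12$ or $(n^2-4)/12$ according to the parity of $n$, so the cases $n\equiv\pm1\pmod 3$ merge as stated.

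The main obstacle is bookkeeping rather than any idea: getting the parity count right in each subcase, including the case $m=0$ where the sum is empty. We also need to confirm that the resulting quadratic in $m$ agrees with $(n^2-c)/12$ after substituting $m=(n-r)/3$. We would organize this as a small table over the six residues $n\bmod 6$, checking in each that the formula returns an integer as a sanity test. Small values ($n=3,\dots,8$, giving $1,1,2,3,4,5$) confirm each row.
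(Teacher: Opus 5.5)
Your proposal is correct and follows essentially the same route as the paper: both remove the floor in the lattice-point sum by splitting on the parity of $z$ and then summing arithmetic progressions, with the paper working out only the case $n$ odd, $3\mid n$ explicitly. Writing the sum as one linear sum minus $\frac12$ times a parity count and parametrizing by $n=3m+r$ makes all six residues mod $6$ uniform. I checked the four subcases you left to the reader, and they agree with the claimed formulas.
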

\begin{proof}
Suppose $n\equiv 1\pmod{2}$ and $n\equiv 0\pmod{3}$. Then
\begin{align*}
p(n,3)&=\sum_{z=1}^{\left\lfloor\frac{n}{3}\right\rfloor}\left(\left\lfloor-\frac{3}{2}z+\frac{n}{2}\right\rfloor+1\right)=\sum_{z=1}^{\frac{n}{3}}\left(\left\lfloor-\frac{3}{2}z+\frac{1}{2}\right\rfloor+\frac{n-1}{2}+1\right)\\
&=\sum_{z=1}^{\frac{n}{3}}\left(\left\lfloor\frac{1}{2}(1-3z)\right\rfloor+\frac{n+1}{2}\right)\\
&=\frac{n+1}{2}\cdot\frac{n}{3}+\sum_{k=1}^{\frac{n+3}{6}}\frac{1}{2}(1-3(2k-1))+\sum_{k=1}^{\frac{n-3}{6}}\frac{1}{2}(-3(2k))\\
&=\frac{n^{2}+n}{6}-\frac{n^{2}+4n+3}{24}-\frac{n^{2}-9}{24}=\frac{n^{2}+3}{12}\,.
\end{align*}
Hence, this case is proven. The remaining cases can be shown by the same exact means.
\end{proof}

\section{The Combinatorial Proof of $p(a,3)+p(b,3)=p(c,3)$}

It is evident from the definition that if $n>m$ are two natural numbers, then $P_{x}(m)\subset P_{x}(n)$. We will show that if $\{a,b,c\}$ is a Pythagorean triple such that $a^{2}+b^{2}=c^{2}$, then $\lvert P_{x}(c)\setminus P_{x}(a)\rvert=\lvert P_{x}(b)\rvert$, which combinatorially proves $p(c,3)-p(a,3)=p(b,3)$, our desired identity.

For the general overview of the proof structure, we observe that $P_{x}(a)$ and $P_{x}(c)$ are bounded by similar right triangles, so $P_{x}(c)\setminus P_{x}(a)$ are lattice points bounded by a trapezoidal region. We count these lattice points in two steps: first we count the lattice points bounded by a parallelogram region inside $P_{x}(c)\setminus P_{x}(a)$. Then we count the remaining lattice points bounded by the remaining triangular region as half of the lattice points bounded by some rectangular region, with slight modifications depending on congruences of $a$ and $c$ modulo $2$ and $3$.

We will prove this by cases, namely
\begin{enumerate}
    \item $c\equiv 1\pmod{2}$ and $c\equiv 0\pmod{3}$,
    \item $c\equiv 0\pmod{2}$ and $c\equiv 0\pmod{3}$,
    \item $c\equiv 1\pmod{2}$ and $c\equiv 1\pmod{3}$,
    \item $c\equiv 0\pmod{2}$ and $c\equiv 1\pmod{3}$,
    \item $c\equiv 1\pmod{2}$ and $c\equiv 2\pmod{3}$,
    \item $c\equiv 0\pmod{2}$ and $c\equiv 2\pmod{3}$.
\end{enumerate}

\subsection*{The case $c\equiv 1\pmod{2}$ and $c\equiv 0\pmod{3}$}

Without loss of generality, let $a$ be the length of the leg satisfying $a\equiv 0\pmod{2}$ and $a\equiv 0\pmod{3}$, or $a\equiv 0\pmod{6}$, which must exist. We look at the trapezoidal region $P_{x}(c)\setminus P_{x}(a)=ABCD$ bounded by the vertices $A=\left(0,\frac{a}{3}-1\right)$, $B=\left(0,\frac{c}{3}-1\right)$, $C=\left(\frac{c-3}{2},0\right)$, and $D=\left(\frac{a-3}{2},0\right)$. We want to show that $\lvert P_{x}(c)\setminus P_{x}(a)\rvert=p(b,3)$ for the other leg $b$ of the Pythagorean triple.

For every integer $y$-coordinate $0\leq y\leq\frac{a-3}{2}-\frac{1}{2}$, there are $\frac{c-a}{3}$ lattice points in $ABCD$, which accounts for $\frac{a-2}{2}\cdot\frac{c-a}{3}=\frac{ac-a^{2}-2c+2a}{6}$ lattice points. For the remaining integer $y$-coordinates $\frac{a-2}{2}\leq y\leq\frac{c-3}{2}$, we can verify that there are $\frac{1}{2}\left(\frac{c-a}{3}+1\right)\left(\frac{c-a+1}{2}\right)$ lattice points in $ABCD$ by observing that they form a triangular region that contains half as many lattice points as a rectangular region with vertices $(0,0)$, $\left(0,\frac{c-a-1}{2}\right)$, $\left(\frac{c-a}{3},0\right)$, and $\left(\frac{c-a}{3},\frac{c-a-1}{2}\right)$. Thus we obtain that
\begin{align}
\lvert ABCD\rvert&=\frac{ac-a^{2}-2c+2a}{6}+\frac{1}{2}\left(\frac{c-a}{3}+1\right)\left(\frac{c-a+1}{2}\right)\\
&=\frac{2ac-2a^{2}-4c+4a}{12}+\frac{c^{2}-2ac+a^{2}+4c-4a+3}{12}\\
&=\frac{c^{2}-a^{2}+3}{12}\\
&=\frac{b^{2}+3}{12}\,.
\end{align}

Since this case forces $b\equiv 1\pmod{2}$ and $b\equiv 0\pmod{3}$, by Corollary \ref{cor} we have $p(c,3)-p(a,3)=\lvert P_{x}(c)\setminus P_{x}(a)\rvert=\lvert ABCD\rvert=\lvert P_{x}(b)\rvert=p(b,3)$, which proves this case of the theorem.

\subsection*{The case $c\equiv 0\pmod{2}$ and $c\equiv 0\pmod{3}$}

Let $a$ be the length of either of the two legs, which is guaranteed to satisfy $a\equiv 0\pmod{2}$ and $a\equiv 0\pmod{3}$, or $a\equiv 0\pmod{6}$. Similarly to the previous case, for every integer $y$-coordinate $0\leq y\leq\frac{a-3}{2}-\frac{1}{2}$, there are $\frac{c-a}{3}$ lattice points in $ABCD$, which accounts for $\frac{a-2}{2}\cdot\frac{c-a}{3}=\frac{ac-a^{2}-2c+2a}{6}$ lattice points. For the remaining integer $y$-coordinates $\frac{a}{2}\leq y\leq\frac{c-4}{2}$, we can verify that there are $\frac{1}{2}\left(\frac{c-a}{3}\right)\left(\frac{c-a}{2}+2\right)$ lattice points in $ABCD$ by observing that they form a triangular region that contains half as many lattice points as a rectangular region with vertices $(0,0)$, $\left(0,\frac{c-a}{2}+1\right)$, $\left(\frac{c-a}{3}-1,0\right)$, and $\left(\frac{c-a}{3}-1,\frac{c-a}{2}+1\right)$. Thus we obtain that
\begin{align}
\lvert ABCD\rvert&=\frac{ac-a^{2}-2c+2a}{6}+\frac{1}{2}\left(\frac{c-a}{3}\right)\left(\frac{c-a}{2}+2\right)\\
&=\frac{2ac-2a^{2}-4c+4a}{12}+\frac{c^{2}-2ac+a^{2}+4c-4a}{12}\\
&=\frac{c^{2}-a^{2}}{12}\\
&=\frac{b^{2}}{12}\,.
\end{align}

Since this case forces $b\equiv 0\pmod{2}$ and $b\equiv 0\pmod{3}$, by Corollary \ref{cor} we have $p(c,3)-p(a,3)=\lvert P_{x}(c)\setminus P_{x}(a)\rvert=\lvert ABCD\rvert=\lvert P_{x}(b)\rvert=p(b,3)$, which proves this case of the theorem.

\subsection*{The case $c\equiv 1\pmod{2}$ and $c\equiv 1\pmod{3}$}

Without loss of generality, let $a$ be the length of the leg satisfying $a\equiv 0\pmod{3}$, which must exist.

\subsubsection*{The subcase $a\equiv 0\pmod{2}$}

For every integer $y$-coordinate $y\in\left\{2,5,8,\ldots,\frac{a-8}{2}\right\}$, equivalently $\frac{a-6}{6}$ many of them, there are $\frac{c-a+2}{3}$ lattice points in $ABCD$. For every other integer $y$-coordinate $0\leq y\leq\frac{a-3}{2}-\frac{1}{2}$ not in $\left\{2,5,8,\ldots,\frac{a-8}{2}\right\}$, equivalently $\frac{a-2}{2}-\frac{a-6}{6}=\frac{a}{3}$ many of them, there are $\frac{c-a-1}{3}$ lattice points in $ABCD$. This accounts for $\frac{a-6}{6}\cdot\frac{c-a+2}{3}+\frac{a}{3}\cdot\frac{c-a-1}{3}=\frac{ac-a^{2}-2c+2a-4}{6}$ lattice points. For the remaining integer $y$-coordinates $\frac{a-2}{2}\leq y\leq\frac{c-3}{2}$, we can verify that there are $\frac{1}{2}\left(\frac{c-a-1}{3}\right)\left(\frac{c-a+1}{2}+1\right)+\frac{c-a+5}{6}$ lattice points in $ABCD$ by observing that they form a triangular region that contains half as many lattice points as a rectangular region with vertices $(0,0)$, $\left(0,\frac{c-a+1}{2}\right)$, $\left(\frac{c-a-1}{3}-1,0\right)$, $\left(\frac{c-a-1}{3}-1,\frac{c-a+1}{2}\right)$, with $\frac{c-a+5}{6}$ points undercounted. The same kind
of argument will be used in the remaining cases, so we will give suggestive formulations instead
of drawn out arguments, which the reader can verify. Thus we obtain that
\begin{align}
\lvert ABCD\rvert&=\frac{ac-a^{2}-2c+2a-4}{6}+\frac{1}{2}\left(\frac{c-a-1}{3}\right)\left(\frac{c-a+1}{2}+1\right)+\frac{c-a+5}{6}\\
&=\frac{2ac-2a^{2}-4c+4a-8}{12}+\frac{c^{2}-2ac+a^{2}+4c-4a+7}{12}\\
&=\frac{c^{2}-a^{2}-1}{12}\\
&=\frac{b^{2}-1}{12}\,.
\end{align}

Since this case forces $b\equiv 1\pmod{2}$ and $b\equiv\pm 1\pmod{3}$, by Corollary \ref{cor} we have $p(c,3)-p(a,3)=\lvert P_{x}(c)\setminus P_{x}(a)\rvert=\lvert ABCD\rvert=\lvert P_{x}(b)\rvert=p(b,3)$, which proves this case of the theorem.

\subsubsection*{The subcase $a\equiv 1\pmod{2}$}

For every integer $y$-coordinate $y\in\left\{2,5,8,\ldots,\frac{a-5}{2}\right\}$, equivalently $\frac{a-3}{6}$ many of them, there are $\frac{c-a+2}{3}$ lattice points in $ABCD$. For every other integer $y$-coordinate $0\leq y\leq\frac{a-3}{2}$ not in $\left\{2,5,8,\ldots,\frac{a-5}{2}\right\}$, equivalently $\frac{a-1}{2}-\frac{a-3}{6}=\frac{a}{3}$ many of them, there are $\frac{c-a-1}{3}$ lattice points in $ABCD$. This accounts for $\frac{a-3}{6}\cdot\frac{c-a+2}{3}+\frac{a}{3}\cdot\frac{c-a-1}{3}=\frac{ac-a^{2}-c+a-2}{6}$ lattice points. For the remaining integer $y$-coordinates $\frac{a-1}{2}\leq y\leq\frac{c-3}{2}$, there are $\frac{1}{2}\left(\frac{c-a+2}{3}\right)\left(\frac{c-a}{2}\right)$ lattice points in $ABCD$. Thus we obtain that
\begin{align}
\lvert ABCD\rvert&=\frac{ac-a^{2}-c+a-2}{6}+\frac{1}{2}\left(\frac{c-a+2}{3}\right)\left(\frac{c-a}{2}\right)\\
&=\frac{2ac-2a^{2}-2c+2a-4}{12}+\frac{c^{2}-2ac+a^{2}+2c-2a}{12}\\
&=\frac{c^{2}-a^{2}-4}{12}\\
&=\frac{b^{2}-4}{12}\,.
\end{align}

Since this case forces $b\equiv 0\pmod{2}$ and $b\equiv\pm 1\pmod{3}$, by Corollary \ref{cor} we have $p(c,3)-p(a,3)=\lvert P_{x}(c)\setminus P_{x}(a)\rvert=\lvert ABCD\rvert=\lvert P_{x}(b)\rvert=p(b,3)$, which proves this case of the theorem.

\subsection*{The case $c\equiv 0\pmod{2}$ and $c\equiv 1\pmod{3}$}

Without loss of generality, let $a$ be the length of the leg satisfying $a\equiv 0\pmod{2}$ and $a\equiv 0\pmod{3}$, or $a\equiv 0\pmod{6}$, which must exist. For every integer $y$-coordinate $y\in\left\{2,5,8,\ldots,\frac{a-8}{2}\right\}$, equivalently $\frac{a-6}{6}$ many of them, there are $\frac{c-a+2}{3}$ lattice points in $ABCD$. For every other integer $y$-coordinate $0\leq y\leq\frac{a-3}{2}-\frac{1}{2}$ not in $\left\{2,5,8,\ldots,\frac{a-8}{2}\right\}$, equivalently $\frac{a-2}{2}-\frac{a-6}{6}=\frac{a}{3}$ many of them, there are $\frac{c-a-1}{3}$ lattice points in $ABCD$. This accounts for $\frac{a-6}{6}\cdot\frac{c-a+2}{3}+\frac{a}{3}\cdot\frac{c-a-1}{3}=\frac{ac-a^{2}-2c+2a-4}{6}$ lattice points. For the remaining integer $y$-coordinates $\frac{a-2}{2}\leq y\leq\frac{c-4}{2}$, there are $\frac{1}{2}\left(\frac{c-a+2}{3}\right)\left(\frac{c-a}{2}\right)+\frac{c-a+2}{6}$ lattice points in $ABCD$. Thus we obtain that
\begin{align}
\lvert ABCD\rvert&=\frac{ac-a^{2}-2c+2a-4}{6}+\frac{1}{2}\left(\frac{c-a+2}{3}\right)\left(\frac{c-a}{2}\right)+\frac{c-a+2}{6}\\
&=\frac{2ac-2a^{2}-4c+4a-8}{12}+\frac{c^{2}-2ac+a^{2}+4c-4a+4}{12}\\
&=\frac{c^{2}-a^{2}-4}{12}\\
&=\frac{b^{2}-4}{12}\,.
\end{align}

Since this case forces $b\equiv 0\pmod{2}$ and $b\equiv\pm 1\pmod{3}$, by Corollary \ref{cor} we have $p(c,3)-p(a,3)=\lvert P_{x}(c)\setminus P_{x}(a)\rvert=\lvert ABCD\rvert=\lvert P_{x}(b)\rvert=p(b,3)$, which proves this case of the theorem.

\subsection*{The case $c\equiv 1\pmod{2}$ and $c\equiv 2\pmod{3}$}

Without loss of generality, let $a$ be the length of the leg satisfying $a\equiv 0\pmod{3}$, which must exist.

\subsubsection*{The subcase $a\equiv 0\pmod{2}$}

For every integer $y$-coordinate $y\in\left\{0,3,6,\ldots,\frac{a-6}{2}\right\}$, equivalently $\frac{a}{6}$ many of them, there are $\frac{c-a-2}{3}$ lattice points in $ABCD$. For every other integer $y$-coordinate $0\leq y\leq\frac{a-3}{2}-\frac{1}{2}$ not in $\left\{0,3,6,\ldots,\frac{a-6}{2}\right\}$, equivalently $\frac{a-2}{2}-\frac{a}{6}=\frac{a-3}{3}$ many of them, there are $\frac{c-a+1}{3}$ lattice points in $ABCD$. This accounts for $\frac{a}{6}\cdot\frac{c-a-2}{3}+\frac{a-3}{3}\cdot\frac{c-a+1}{3}=\frac{ac-a^{2}-2c+2a-2}{6}$ lattice points. For the remaining integer $y$-coordinates $\frac{a-2}{2}\leq y\leq\frac{c-3}{2}$, there are $\frac{1}{2}\left(\frac{c-a+1}{3}\right)\left(\frac{c-a+1}{2}+1\right)$ lattice points in $ABCD$. Thus we obtain that
\begin{align}
\lvert ABCD\rvert&=\frac{ac-a^{2}-2c+2a-2}{6}+\frac{1}{2}\left(\frac{c-a+1}{3}\right)\left(\frac{c-a+1}{2}+1\right)\\
&=\frac{2ac-2a^{2}-4c+4a-4}{12}+\frac{c^{2}-2ac+a^{2}+4c-4a+3}{12}\\
&=\frac{c^{2}-a^{2}-1}{12}\\
&=\frac{b^{2}-1}{12}\,.
\end{align}

Since this case forces $b\equiv 1\pmod{2}$ and $b\equiv\pm 1\pmod{3}$, by Corollary \ref{cor} we have $p(c,3)-p(a,3)=\lvert P_{x}(c)\setminus P_{x}(a)\rvert=\lvert ABCD\rvert=\lvert P_{x}(b)\rvert=p(b,3)$, which proves this case of the theorem.

\subsubsection*{The subcase $a\equiv 1\pmod{2}$}

For every integer $y$-coordinate $y\in\left\{0,3,6,\ldots,\frac{a-3}{2}\right\}$, equivalently $\frac{a+3}{6}$ many of them, there are $\frac{c-a-2}{3}$ lattice points in $ABCD$. For every other integer $y$-coordinate $0\leq y\leq\frac{a-3}{2}$ not in $\left\{0,3,6,\ldots,\frac{a-3}{2}\right\}$, equivalently $\frac{a-1}{2}-\frac{a+3}{6}=\frac{a-3}{3}$ many of them, there are $\frac{c-a+1}{3}$ lattice points in $ABCD$. This accounts for $\frac{a+3}{6}\cdot\frac{c-a-2}{3}+\frac{a-3}{3}\cdot\frac{c-a+1}{3}=\frac{ac-a^{2}-c+a-4}{6}$ lattice points. For the remaining integer $y$-coordinates $\frac{a-1}{2}\leq y\leq\frac{c-3}{2}$, there are $\frac{1}{2}\left(\frac{c-a+1}{3}+1\right)\left(\frac{c-a}{2}\right)-\frac{c-a-2}{6}$ lattice points in $ABCD$. Thus we obtain that
\begin{align}
\lvert ABCD\rvert&=\frac{ac-a^{2}-c+a-4}{6}+\frac{1}{2}\left(\frac{c-a+1}{3}+1\right)\left(\frac{c-a}{2}\right)-\frac{c-a-2}{6}\\
&=\frac{2ac-2a^{2}-2c+2a-8}{12}+\frac{c^{2}-2ac+a^{2}+2c-2a+4}{12}\\
&=\frac{c^{2}-a^{2}-4}{12}\\
&=\frac{b^{2}-4}{12}\,.
\end{align}

Since this case forces $b\equiv 0\pmod{2}$ and $b\equiv\pm 1\pmod{3}$, by Corollary \ref{cor} we have $p(c,3)-p(a,3)=\lvert P_{x}(c)\setminus P_{x}(a)\rvert=\lvert ABCD\rvert=\lvert P_{x}(b)\rvert=p(b,3)$, which proves this case of the theorem.

\subsection*{The case $c\equiv 0\pmod{2}$ and $c\equiv 2\pmod{3}$}

Without loss of generality, let $a$ be the length of the leg satisfying $a\equiv 0\pmod{2}$ and $a\equiv 0\pmod{3}$, or $a\equiv 0\pmod{6}$, which must exist. For every integer $y$-coordinate $y\in\left\{0,3,6,\ldots,\frac{a-6}{2}\right\}$, equivalently $\frac{a}{6}$ many of them, there are $\frac{c-a-2}{3}$ lattice points in $ABCD$. For every other integer $y$-coordinate $0\leq y\leq\frac{a-3}{2}-\frac{1}{2}$ not in $\left\{0,3,6,\ldots,\frac{a-6}{2}\right\}$, equivalently $\frac{a-2}{2}-\frac{a}{6}=\frac{a-3}{3}$ many of them, there are $\frac{c-a+1}{3}$ lattice points in $ABCD$. This accounts for $\frac{a}{6}\cdot\frac{c-a-2}{3}+\frac{a-3}{3}\cdot\frac{c-a+1}{3}=\frac{ac-a^{2}-2c+2a-2}{6}$ lattice points. For the remaining integer $y$-coordinates $\frac{a-2}{2}\leq y\leq\frac{c-4}{2}$, there are $\frac{1}{2}\left(\frac{c-a+1}{3}+1\right)\left(\frac{c-a}{2}\right)$ lattice points in $ABCD$. Thus we obtain that
\begin{align}
\lvert ABCD\rvert&=\frac{ac-a^{2}-2c+2a-2}{6}+\frac{1}{2}\left(\frac{c-a+1}{3}+1\right)\left(\frac{c-a}{2}\right)\\
&=\frac{2ac-2a^{2}-4c+4a-4}{12}+\frac{c^{2}-2ac+a^{2}+4c-4a}{12}\\
&=\frac{c^{2}-a^{2}-4}{12}\\
&=\frac{b^{2}-4}{12}\,.
\end{align}

Since this case forces $b\equiv 0\pmod{2}$ and $b\equiv\pm 1\pmod{3}$, by Corollary \ref{cor} we have $p(c,3)-p(a,3)=\lvert P_{x}(c)\setminus P_{x}(a)\rvert=\lvert ABCD\rvert=\lvert P_{x}(b)\rvert=p(b,3)$, which proves this case of the theorem.

Having checked all possible cases, our combinatorial proof for the partition identity $p(a,3)+p(b,3)=p(c,3)$ is complete.

\section{Acknowledgment}

The author would like to thank Professor Satyanand Singh for his advisorship on the progress of this problem, as well as Professor Brian Hopkins for his communication of the problem during the Combinatorial and Additive Number Theory Conference 2025.

\end{document}